\documentclass[12pt,reqno]{amsart}
\usepackage{amsthm,amsmath,amsfonts,amssymb,euscript,hyperref,graphics,color,slashed, mathrsfs}
\usepackage{CJK}
\usepackage{graphicx}
\usepackage{makecell,rotating,multirow}
\newtheorem{theorem}{Theorem}[section]
\newtheorem{lemma}[theorem]{Lemma}

\newtheorem{remark}[theorem]{Remark}

\setlength{\textwidth}{16cm} \setlength{\oddsidemargin}{0cm}
\setlength{\evensidemargin}{0cm}

\numberwithin{equation}{section}

\def\Lambdab{{\underline{\Lambda}}}
\def\ub{\underline{u}}

\def\C{\mathcal{C}}
\def\D{\mathcal{D}}

\def\F{\mathcal{F}}

\def\E{\mathcal{E}}
\def\Cb{\underline{\mathcal{C}}}

\def\Lb{{\underline{L}}}

\begin{document}

\title[1D Wave]{On one-dimension semi-linear wave equations with null conditions}

\author[Garving K. Luli]{Garving K. Luli}
\address{Department of Mathematics, University of California, Davis \\ USA}
\email{kluli@math.ucdavis.edu}

\author[Shiwu Yang]{Shiwu Yang}
\address{Beijing International Center for Mathematical Research, Peking University\\ Beijing, China}
\email{shiwuyang@math.pku.edu.cn}

\author[Pin Yu]{Pin Yu}
\address{Department of Mathematics and Yau Mathematical Sciences Center, Tsinghua University\\ Beijing, China}
\email{yupin@mail.tsinghua.edu.cn}

\begin{abstract}
It is well-known that in dimensions at least three semilinear wave equations with null conditions admit global solutions for small initial data. It is also known that in dimension two such result still holds for a certain class of quasi-linear wave equations with null conditions. The proofs are based on the decay mechanism of linear waves. However, in one dimension, waves do not decay. Nevertheless, we will prove that small data still lead to global solutions if the null condition is satisfied.
\end{abstract}
\maketitle
\tableofcontents

\section{Introduction}
In the past four decades, the semilinear wave equations in the following form
\begin{equation*}
\Box \varphi = Q(\partial \varphi, \partial \varphi),
\end{equation*}
have been studied intensively and have found many deep applications in geometry and physics. We assume that the field $\varphi(t,x)$ is defined on $\mathbb{R}^{n+1}$, where $x\in \mathbb{R}^n$ and $t\in \mathbb{R}$. The symbol $Q$ denotes a real valued quadratic form on $\mathbb{R}^{n+1}$. Let $\xi \in \mathbb{R}^{n+1}$ be a vector. Thus, in local frame, we have
\begin{equation*}
Q(\xi,\xi)= Q_{\mu\nu}\xi^\mu\xi^\nu.
\end{equation*}
The form $Q$ is called a \emph{null form} if for all null vectors $\xi$,  we have $ Q(\xi,\xi)= 0$. The symbol $Q(\partial \varphi, \partial \varphi)$ in the equation denotes the nonlinearity $Q^{\mu\nu}\partial_\mu\varphi\partial_\nu\varphi$. We will briefly summarize the progress on small data theory for this type of equations.

\medskip

The approach to understand the small data problem is based on the decay mechanism of linear waves. For $n\geq 4$, since linear waves decay at the rate $(1+t)^{-\frac{n-1}{2}}$(which is integrable in $t$), the small-data-global-existence type theorems hold for generic quadratic nonlinearities, see Klainerman \cite{K-80} and \cite{K-84}. However, in $\mathbb{R}^{3+1}$, the slower decay rate $(1+t)^{-1}$ just barely fails to be integrable in time, which may result in a finite time blow up of the solution even with arbitrarily small data. For example, John \cite{J-79} showed that any $C^3$ solution of the following equation
\[
\Box\varphi=|\partial_t\varphi|^2
\]
in $\mathbb{R}^{3+1}$ with nontrivial data blows up in finite time. In other words, additional conditions have to be made on the nonlinearity in order to construct a global solution.
The breakthrough along this direction was made by Klainerman in \cite{K-85} by introducing the celebrated null conditions. More precisely, if the quadratic part $Q$ of the nonlinearity is a null form, Klainerman \cite{KL-86} and Christodoulou \cite{Ch-86} have independently provided proofs for the small-data-global-existence results. Although their approaches are different, both proofs rely on the special cancelations of the null form. We remark here that in $\mathbb{R}^{3+1}$ the null condition is a sufficient but not necessary condition to obtain a small-data-global-existence result, see e.g. \cite{lindblad-weak}, \cite{igor-Msta}. For $\mathbb{R}^{2+1}$, the aforementioned classical null condition is not sufficient to guarantee a small-data-global-existence result as general cubic terms may lead to a finite time blow up of the solution. Nevertheless, Alinhac \cite{Ali-01} introduced a more restricted type of null conditions for a class of two dimensional quasilinear wave equations and under those conditions he was able to establish a small-data-global-existence result.

\medskip

All the above mentioned results are based on the following idea: the smallness of the initial data implies that the nonlinear equation can be solved for a sufficiently long time. The global solution can then be constructed once the nonlinearity decays sufficiently. The lower decay rate in low dimensions can be compensated by the special structure of the nonlinearity, namely, the null condition mentioned above. This idea may fail in $\mathbb{R}^{1+1}$ as waves in $\mathbb{R}^{1+1}$ do not decay, see e.g. \cite{john:12dwave:nonex}. Similar to $\mathbb{R}^{3+1}$, special structure of the equation may be of importance to study the asymptotic behavior of solutions of nonlinear wave equations in $\mathbb{R}^{1+1}$. Gu in \cite{gu:1dwavemap} investigated the wave map problem from $\mathbb{R}^{1+1}$ to a complete Riemannian manifold and showed that the map is regular for all time. The proof heavily relies on the geometric structure of the equations and the global regularity of the solution is a consequence of the conserved length of the tangent vector fields on the target manifold. For general nonlinear equations, Nakamura in \cite{nakamura:1dwave} studied the long time behavior of the solutions and obtained a lower bound on the life span for nonlinearity satisfying the above null condition. Moreover he was able to obtain a global solution if the nonlinear term is of the form $h(\varphi, \partial \varphi)Q^2(\partial \varphi, \partial \varphi)$, where $h$ is smooth function and $Q$ is a null form. The proof is based on an integrated local energy estimate adapted to the linear wave equation in $\mathbb{R}^{1+1}$. Such estimate plays a crucial role in the study of the asymptotic behavior of solutions of linear or nonlinear wave equations in high dimensions, see e.g. \cite{igor:redshif} and references therein. It should be noted that the integrated local energy estimate is a spacetime integral with negative weights and hence contains limited decay information on the solution. This is the reason why Nakamura requires $Q^2$ instead of $Q$ in the nonlinearity in order to obtain a global solution.

 \medskip

 The aim of the present paper is to introduce a new type of weighted energy estimates with positive weights for linear waves in $\mathbb{R}^{1+1}$. Among other things, these new estimates allow us to improve the decay estimates on the null form $Q(\partial \varphi, \partial \varphi)$. As a consequence, we strengthen the result of Nakamura in the sense that it is sufficient to require the nonlinearity to be of the form $h(\varphi, \partial \varphi)Q(\partial \varphi, \partial \varphi)$ instead of $h(\varphi, \partial \varphi)Q^2(\partial \varphi, \partial \varphi)$ in order to construct a global solution for the associated nonlinear equations.

\bigskip

We now elaborate on our main result of this paper. 

Let $\Phi(t,x):\mathbb{R}\times\mathbb{R} \rightarrow \mathbb{R}^n$ be a vector valued function. More explicitly, we can write $\Phi(t,x)=\big(\Phi_1(t,x),\cdots,\Phi_n(t,x)\big)$. 

We consider the following system of wave equations:
\begin{equation}\label{main equation}
\begin{split}
\Box\Phi &= N(t,x),\\
(\Phi,\partial_t \Phi)\big|_{t=0}&= \varepsilon(F(x),G(x)).
\end{split}
\end{equation}
Here $\varepsilon \geq 0$ is a constant and $F, G$ are smooth real-valued functions; $N(t,x) = \big(N_1(t,x),\cdots,N_n(t,x)\big)$ are quadratic nonlinearities in $\partial \Phi$ with null conditions. More precisely, for $i,k,l=1,2,\cdots, n$, there exist constants $C^{kl}_i$ so that $N_i(t,x)$ can be written as
\begin{equation*}
\begin{split}
N_{i}(t,x) &=\sum_{k,l=1}^n C^{kl}_{i}\cdot (\partial_t+\partial_x)\Phi_k \cdot (\partial_t-\partial_x)\Phi_l\\
&=\sum_{k,l=1}^n C^{kl}_{i}\cdot L\Phi_k \cdot \Lb\Phi_l,
\end{split}
\end{equation*}
where $L=\partial_t+\partial_x$ and $\Lb=\partial_t-\partial_x$ are the two principal null vectors. Thus, the quadratic nonlinearities $N_{i}$'s are linear combinations of the quadratic forms of the following type:
\begin{equation*}
Q(\partial \Phi_k,\partial \Phi_l) = \alpha L \Phi_k \Lb \Phi_l+\beta \Lb \Phi_k L \Phi_l,
\end{equation*}
where $\alpha$, $\beta$ are real numbers. In particular, this means that in the frame $(L,\Lb)$, as a matrix, $Q$ can be written as $
  \left( {\begin{array}{cc}
   0 & \alpha \\
   \beta & 0 \\
  \end{array} } \right)$. This means $Q(L,L)=0$ and $Q(\Lb,\Lb)=0$. As a conclusion, $Q$ is a null form on $\mathbb{R}^{1+1}$. On the other hand, it is obvious that a null form must be of this form. Therefore, the system \eqref{main equation} represents all semilinear wave equations with quadratic null form nonlinearities. Schematically, we also write \eqref{main equation} as
\begin{equation*}
\begin{split}
\Box\Phi &= Q(\partial\Phi,\partial \Phi),
\end{split}
\end{equation*}
to emphasize that $Q$ is a null form.

\medskip

We remark that this system of equations indeed can be viewed as a model problem for incompressible MHD systems placed in a strong magnetic background. Thus, we can use \eqref{main equation} to describe the propagation of Alfv\'en waves and we refer to \cite{HXY} for more details.

\bigskip

The main theorem of the paper is as follows
\begin{theorem} In the setting of system \eqref{main equation}, we have the following: For all $0<\delta<1$, there exists a universal small constant $\varepsilon_0 > 0$ such that the following holds.  Suppose
\begin{equation*}
\sum_{k=0,1}\int_{\mathbb{R}}(1+|x|)^{2+2\delta}\big(|\partial_x^k \partial_x F|^2+|\partial_x^k G|^2\big)dx \leq 1.
\end{equation*}
Then for all positive constants $\varepsilon<\varepsilon_0$, system \eqref{main equation} admits global solutions.
\end{theorem}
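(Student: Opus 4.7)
The plan is to close a pointwise bootstrap on the null derivatives $L\Phi_i$ and $\Lb\Phi_i$, weighted by $(1+|t+x|)^{1+\delta}$ and $(1+|t-x|)^{1+\delta}$ respectively, by integrating the equation along null characteristics. Rewriting the system via $\Box=-\Lb L$ gives the transport form $\Lb L\Phi_i=-N_i$ with $N_i=\sum_{k,l}C^{kl}_i L\Phi_k\Lb\Phi_l$. In the null coordinates $u=t-x$, $\ub=t+x$, the integral curve of $\Lb$ through a point $(t,x)$ emanating from the initial slice is $\tau\mapsto(\tau,\ub-\tau)$ for $\tau\in[0,t]$; on this curve $\ub$ is constant while $u(\tau)=2\tau-\ub$ sweeps through an interval of length $2t$.

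First I would extract an initial pointwise decay from the weighted $L^2$ hypothesis: using $L\Phi_i(0,x)=\varepsilon(G_i+F_i')$ and $\partial_x L\Phi_i(0,x)=\varepsilon(G_i'+F_i'')$, the assumption bounds both $\|(1+|x|)^{1+\delta}L\Phi_i(0,\cdot)\|_{L^2}$ and the $L^2$-norm of its $\partial_x$-derivative by a constant multiple of $\varepsilon$; the standard one-dimensional Sobolev inequality $\|g\|_{L^\infty}^2\le 2\|g\|_{L^2}\|g'\|_{L^2}$ then yields
\[
(1+|x|)^{1+\delta}\bigl(|L\Phi_i(0,x)|+|\Lb\Phi_i(0,x)|\bigr)\le A_0\varepsilon
\]
for a constant $A_0=A_0(\delta)$. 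Standard local well-posedness produces a smooth solution on a maximal interval $[0,T^{*})$, on which I set up the bootstrap
\[
(1+|t+x|)^{1+\delta}|L\Phi_i(t,x)|+(1+|t-x|)^{1+\delta}|\Lb\Phi_i(t,x)|\le A\varepsilon,\qquad i=1,\dots,n,
\]
with $A$ to be determined. The main technical step is to improve this by integrating $\Lb L\Phi_i=-N_i$ along the $\Lb$-characteristic through $(t,x)$:
\[
L\Phi_i(t,x)=L\Phi_i(0,\ub)-\int_0^t\sum_{k,l}C^{kl}_i\,L\Phi_k(\tau,\ub-\tau)\,\Lb\Phi_l(\tau,\ub-\tau)\,d\tau.
\]
Since $\ub$ is constant along this curve, the bootstrap gives $|L\Phi_k(\tau,\ub-\tau)|\le A\varepsilon(1+|\ub|)^{-1-\delta}$ uniformly in $\tau$; and since $u(\tau)=2\tau-\ub$, it gives $|\Lb\Phi_l(\tau,\ub-\tau)|\le A\varepsilon(1+|2\tau-\ub|)^{-1-\delta}$. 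The decisive estimate is
\[
\int_0^t\frac{d\tau}{(1+|2\tau-\ub|)^{1+\delta}}\le\frac{1}{2}\int_{\mathbb{R}}\frac{ds}{(1+|s|)^{1+\delta}}=\frac{1}{\delta},
\]
uniformly in $t$ and $\ub$. Combining the two bootstrap bounds with this integral and multiplying through by $(1+|\ub|)^{1+\delta}$ yields $(1+|t+x|)^{1+\delta}|L\Phi_i(t,x)|\le A_0\varepsilon+C_{\mathrm{str}}A^2\delta^{-1}\varepsilon^2$, where $C_{\mathrm{str}}$ depends only on the coefficients $C^{kl}_i$. A symmetric argument along $L$-characteristics handles $\Lb\Phi_i$. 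Choosing $A=2A_0$ and $\varepsilon_0$ so small that $C_{\mathrm{str}}(2A_0)^2\delta^{-1}\varepsilon_0\le A_0$ improves the bootstrap constant to $A\varepsilon/2$; the resulting uniform $L^\infty$ bound on $\partial\Phi$ extends the solution to all $t\ge 0$ by the standard continuation criterion.

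The main obstacle is the conceptual one of identifying why the null structure rescues the argument in a setting without linear wave decay. On a fixed $\Lb$-characteristic the factor $L\Phi_k$ appearing in the null form is constant (depending only on $\ub$), while $\Lb\Phi_l$ depends only on the transverse variable $u$; hence the null product $L\Phi_k\Lb\Phi_l$, integrated along the characteristic, reduces to a one-dimensional transverse integral $\int(1+|s|)^{-1-\delta}\,ds$ which converges \emph{precisely because} $\delta>0$ is built into the data hypothesis. Without the null structure both factors would share the same characteristic variable and the cubic term would produce linear-in-$t$ growth that no small-data argument could close; without $\delta>0$ the critical transverse integral would diverge. Both ingredients are thus essential, consistent with John's blow-up obstruction and with Nakamura's earlier need for the stronger $Q^2$ structure in the absence of positive weights.
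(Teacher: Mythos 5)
Your argument is correct, but it takes a genuinely different route from the paper. The paper runs an $L^2$-based bootstrap: it introduces the conformal Killing multipliers $Z=\Lambdab(\ub)L+\Lambda(u)\Lb$ with $\Lambda(u)=(1+|u|^2)^{1+\delta}$, derives the weighted energy--flux inequality \eqref{linear energy estimates}, commutes one $\partial_x$ through the equation, bootstraps the norms $\E(t)+\F(t)$, and recovers pointwise decay only a posteriori through weighted Sobolev embeddings (Lemmas \ref{lemma pointwise bound} and \ref{lemma pointwise bound different weights}); the smallness of the nonlinear contribution comes from the integrability of $\Lambdab(\ub)^{-\frac{1}{2}}$ over the foliation by the curves $\Cb_{\ub}^t$. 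You instead exploit the exact factorization of $\Box$ into the two transport operators $L,\Lb$ and run a purely pointwise bootstrap on the weighted null derivatives, integrating the equation for $L\Phi$ along $\Lb$-characteristics. The null structure enters in exactly the same place in both arguments --- the ``good'' factor decays in the variable that is actually being integrated, giving your uniform bound $\int_0^t(1+|2\tau-\ub|)^{-1-\delta}\,d\tau\le\delta^{-1}$ --- but in your version the derivative commutation and the characteristic flux norms are not needed at all: the weighted $H^1$ hypothesis on the data is used only once, to produce the initial pointwise bound via Sobolev. Your method is more elementary and makes the mechanism (left- and right-movers separating in space) very transparent; the paper's energy method is more robust, since it survives in settings where one cannot integrate along characteristics (higher dimensions, lower-order couplings, the MHD application the authors cite) and produces the flux norms $\F(t)$ as additional output. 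Two cosmetic points: the word ``cubic'' in your closing discussion should be ``quadratic''; and your choice $C_{\mathrm{str}}(2A_0)^2\delta^{-1}\varepsilon_0\le A_0$ only yields the non-strict bound $A\varepsilon$ rather than the claimed $A\varepsilon/2$, so you should halve $\varepsilon_0$ (making the error term at most $A_0\varepsilon/2$) to get the strict improvement that closes the continuity argument.
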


In other words, as long as the functions $F$ and $G$ appearing in the initial data of \eqref{main equation} have suitable decay at infinity as measured by the weighted Sobolev norm, we can construct a global solution for the system \eqref{main equation}.

We briefly discuss the key idea behind the proof. As we have mentioned above, in $\mathbb{R}^{1+1}$, there is no decay for linear waves. However, we claim that although the solution $\Phi$ does not decay, the nonlinearity $N(t,x)$ does decay. This is the key observation that allows us to prove the small data global existence result. The geometric interpretation for this is the following: If we think of the solution behaves as linear waves, then we may regard $(\partial_t+\partial_x)\Phi_k $ as left-traveling waves and $(\partial_t-\partial_x)\Phi_l$ as right-traveling waves. For sufficiently long time, which is ensured by the smallness of the initial data, these two families of waves will be separated in space. On the other hand, the null conditions can be phrased as left-traveling waves coupled only with right-traveling waves, since they are far away from each other for large time, the spatial decay now yields decay in time. This new decay mechanism is strongly in contrast with that in the higher dimensional cases, where the improved decay comes from the tangential derivative of the waves along outgoing light cones.

\textbf{Acknowledgments} The first author is partially supported by NSF grant DMS-1554733. The second author is partially supported by NSFC-11701017. The third author is grateful to UC Davis for the support through the New Research Initiatives and Collaborative Interdisciplinary Research Grant during his visit.

\section{Preliminaries: the geometry of $\mathbb{R}^{1+1}$ and linear estimates}

On the two dimensional Minkowski spacetime $\mathbb{R}^{1+1}$, we will use two coordinate systems: the standard Cartesian coordinates $(t,x)$ and the null coordinates $(u,\ub)$. The coordinate functions in the null coordinates are the standard optical functions defined as follows
\begin{equation*}
u=\frac{1}{2}(t-x), \ \ \ub=\frac{1}{2}(t+x).
\end{equation*}
We use $g_{\alpha\beta}$ to denote the standard Minkowski metric on $\mathbb{R}^{1+1}$. In other words, the metric can be written down explicitly in the Cartesian coordinates as
\begin{equation*}
g= -dt^2+dx^2.
\end{equation*}
In the null coordinates, we have
\begin{equation*}
g= -2du d\ub.
\end{equation*}
We have two globally defined null vector fields
\begin{equation*}
L=\partial_t+\partial_x, \ \ \Lb=\partial_t-\partial_x.
\end{equation*}
The metric $g$ can be expressed by the null frame as follows
\begin{equation}\label{null}
g(L,L)=g(\Lb,\Lb)=0, \ \  g(L,\Lb)=-2.
\end{equation}
By definition, we also have
\begin{equation}\label{vectorfield}
Lu=0, \ \ L \ub =1, \ \ \Lb\ub = 0, \ \ \Lb u=1.
\end{equation}

\medskip

We use $\Sigma_{t_0}$ to denote the following time slice in $\mathbb{R}^{1+1}$:
\begin{equation*}
\Sigma_{t_0} :=\big\{(t,x) \,\big|\, t= t_0\big\}.
\end{equation*}
We write$\D_{t_0}$ to denote the following spacetime region:
\begin{equation*}
\D_{t_0} :=\big\{(t,x) \,\big|\, 0\leq t \leq t_0\big\}.
\end{equation*}
In other words, we have $\displaystyle \D_{t_0} = \bigcup_{0\leq t\leq t_0} \Sigma_t$.

The level sets of the optical functions $u$ and $\ub$ define two global null foliations of $ \D_{t_0}$. More precisely, given $t_0 >0$, $u_0$ and $\ub_0$, we define the right-going null curve segment $\C_{u_0}^{t_0}$ as
\begin{equation*}
\C_{u_0}^{t_0} :=\big\{(t,x) \,\big|\, u=\frac{t-x}{2}=u_0, 0\leq t \leq t_0\big\},
\end{equation*}
and the left-going null curve segment $\Cb_{\ub_0}^{t_0}$ as
\begin{equation*}
\Cb_{\ub_0}^{t_0} :=\big\{(t,x) \,\big|\, \ub =\frac{t+x}{2}=\ub_0, 0\leq t\leq t_0\big\}.
\end{equation*}
We also define spatial segments
\begin{equation*}
\Sigma^+_{t_0,u_0} :=\big\{(t,x) \,\big|\, t= t_0, x \geq t_0-2u_0 \big\},
\end{equation*}
and
\begin{equation*}
\Sigma^-_{t_0,\ub_0} :=\big\{(t,x) \,\big|\, t= t_0, x \leq 2\ub_0-t_0 \big\}.
\end{equation*}
Similarly, we define spacetime regions
\begin{equation*}
\D^+_{t_0,u_0} :=\big\{(t,x) \,\big|\, 0 \leq t \leq t_0, x \geq t-2u_0 \big\},
\end{equation*}
and
\begin{equation*}
\D^-_{t_0,\ub_0} :=\big\{(t,x) \,\big|\, 0\leq  t \leq t_0, x \leq 2\ub_0-t \big\}.
\end{equation*}
We depict the above geometric constructions in the following the following picture:

\ \ \ \ \ \ \ \ \ \ \ \ \  \includegraphics[width=5in]{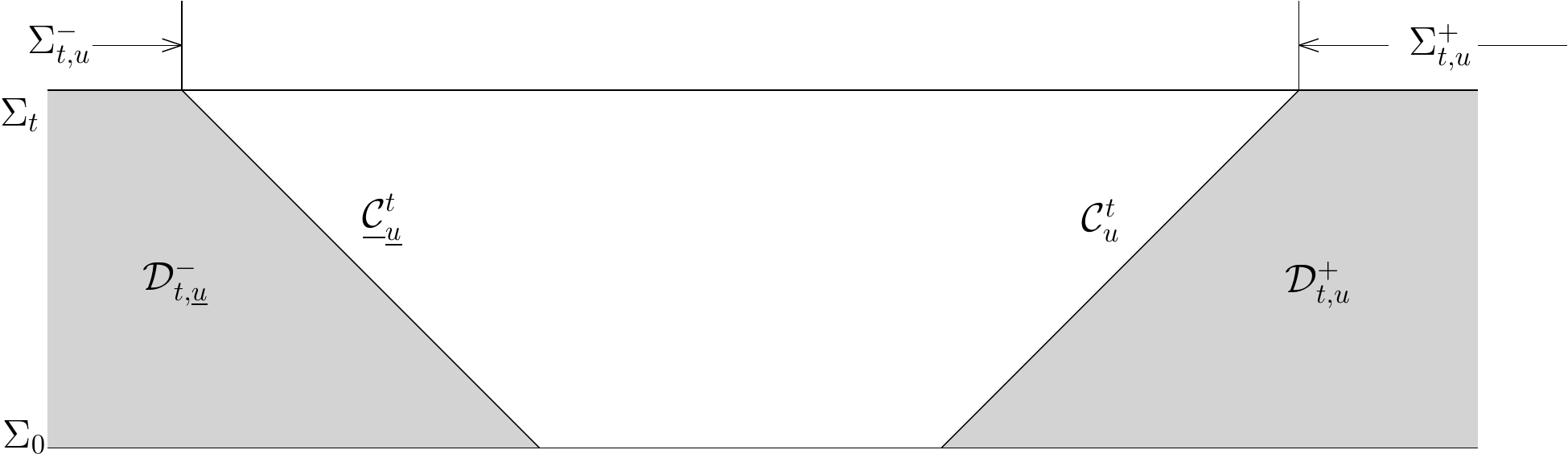}

\noindent The grey regions are $\D^+_{t,u}$ and $\D^-_{t,\ub}$. The entire region enclosed by $\Sigma_t$ and $\Sigma_0$ is $\D_t$.

\bigskip

Let $Z$ be a smooth vector field defined on $\mathbb{R}^{1+1}$. We recall that its deformation tensor $\,^{(Z)}\pi_{\mu\nu}$ is defined as $\displaystyle \frac{1}{2}\mathcal{L}_Z g$ (the Lie derivative) where $g$ is the Minkowski metric. In other words, the deformation tensor of $Z$ is a two tensor and its components are given by
\begin{equation*}
\,^{(Z)}\pi_{\mu\nu}=\frac{1}{2}(\nabla_\mu Z_\nu + \nabla_\nu Z_\mu),
\end{equation*}
where $\nabla$ is the Levi-Civita connection of $g$. If there exists a function $\Omega$ so that $\,^{(Z)}\pi_{\mu\nu} = \Omega \cdot g_{\alpha \beta}$, we say that $Z$ is a conformal Killing vector field. Indeed, for a conformal Killing vector field, its one parameter subgroup of the diffeomorphisms consists of conformal transformations of $(\mathbb{R}^{1+1},g)$.
\begin{lemma}\label{conformal vector fields}
Let $\Lambda$ and $\Lambdab$ be two smooth $\mathbb{R}$-valued one variable functions. Then the vector field
\begin{equation}\label{zconformal}
Z =\Lambdab(\ub)L+\Lambda(u)\Lb
\end{equation}
is a conformal Killing vector field on $\mathbb{R}^{1+1}$. Moreover, we have
\begin{equation}\label{zconformald}
\,^{(Z)}\pi_{\mu\nu} =\frac{1}{2}\big(\Lambdab'(\ub)+\Lambda'(u)\big)g_{\mu\nu}.
\end{equation}
\end{lemma}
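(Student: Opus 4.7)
The plan is to pass to the null coordinates $(u,\ub)$, in which the computation becomes essentially a one-line check. From \eqref{vectorfield}, the principal null fields are simply coordinate fields: $L=\partial_{\ub}$ and $\Lb=\partial_u$. Consequently $Z=\Lambda(u)\,\partial_u+\Lambdab(\ub)\,\partial_{\ub}$, i.e.\ the components of $Z$ in these coordinates are $Z^u=\Lambda(u)$ and $Z^{\ub}=\Lambdab(\ub)$. Moreover, as recorded after \eqref{null}, $g=-2\,du\,d\ub$, so the coordinate components of $g$ are the constants $g_{uu}=g_{\ub\ub}=0$ and $g_{u\ub}=g_{\ub u}=-1$.

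Next I would apply the standard coordinate formula for the Lie derivative,
\begin{equation*}
(\mathcal{L}_Z g)_{\alpha\beta}=Z^{\gamma}\partial_{\gamma}g_{\alpha\beta}+g_{\gamma\beta}\partial_{\alpha}Z^{\gamma}+g_{\alpha\gamma}\partial_{\beta}Z^{\gamma}.
\end{equation*}
The first term vanishes because $g_{\alpha\beta}$ is constant in $(u,\ub)$. For the remaining two terms the crucial input is the \emph{null structure} of the ansatz \eqref{zconformal}: since $Z^u$ depends only on $u$ and $Z^{\ub}$ only on $\ub$, one has $\partial_{\ub}Z^u=0$ and $\partial_u Z^{\ub}=0$. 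This immediately kills the diagonal components: e.g.
\begin{equation*}
(\mathcal{L}_Z g)_{uu}=2g_{\ub u}\,\partial_u Z^{\ub}=0,\qquad (\mathcal{L}_Z g)_{\ub\ub}=2g_{u\ub}\,\partial_{\ub}Z^{u}=0,
\end{equation*}
while the off-diagonal component reduces to
\begin{equation*}
(\mathcal{L}_Z g)_{u\ub}=g_{u\ub}\,\partial_u Z^u+g_{u\ub}\,\partial_{\ub}Z^{\ub}=-\bigl(\Lambda'(u)+\Lambdab'(\ub)\bigr).
\end{equation*}

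Writing $\Omega:=\tfrac12(\Lambda'(u)+\Lambdab'(\ub))$ and using $g_{u\ub}=-1$, this exactly says $(\mathcal{L}_Z g)_{\alpha\beta}=2\Omega\, g_{\alpha\beta}$ in every component, so $\,^{(Z)}\pi_{\mu\nu}=\tfrac12\mathcal{L}_Z g_{\mu\nu}=\Omega\, g_{\mu\nu}$, which is \eqref{zconformald}; conformal-Killingness follows by definition. There is really no obstacle to speak of—the only thing to be careful about is the sign convention $g_{u\ub}=-1$ and the correspondence $L=\partial_{\ub}$, $\Lb=\partial_u$; once these are fixed the null structure of $Z$ is precisely what makes the two diagonal components vanish, which is the whole content of the lemma.
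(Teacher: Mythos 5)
Your argument is correct and reaches the right conclusion; it is essentially the paper's computation in a slightly different dress. The paper evaluates $\,^{(Z)}\pi_{\mu\nu}=\frac{1}{2}(\nabla_\mu Z_\nu+\nabla_\nu Z_\mu)$ on the null frame $(L,\Lb)$, reducing by linearity to $Z=\Lambda(u)\Lb$, whereas you use the coordinate formula for $\mathcal{L}_Zg$ in the null coordinates $(u,\ub)$; since $L=\partial_{\ub}$ and $\Lb=\partial_u$ are coordinate fields, the two computations are the same, and your version has the small advantage of never invoking the connection (the metric components are constant in $(u,\ub)$). One minor slip: with the paper's normalization $u=\frac{1}{2}(t-x)$, $\ub=\frac{1}{2}(t+x)$ one has $g_{u\ub}=g(\Lb,L)=-2$, not $-1$ (the paper's \eqref{null} records $g(L,\Lb)=-2$). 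This is harmless here because the off-diagonal component you compute is $(\mathcal{L}_Zg)_{u\ub}=g_{u\ub}\bigl(\Lambda'(u)+\Lambdab'(\ub)\bigr)$, so the same factor $g_{u\ub}$ appears on both sides of $(\mathcal{L}_Zg)_{\alpha\beta}=2\Omega\,g_{\alpha\beta}$ and cancels, leaving $\Omega=\frac{1}{2}\bigl(\Lambda'(u)+\Lambdab'(\ub)\bigr)$ as claimed.
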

\begin{proof}
It suffices to check \eqref{zconformald}. By linearity and symmetry, it suffices to show that
\begin{equation*}
\,^{(Z)}\pi_{\mu\nu} =\frac{1}{2}\Lambda'(u)\cdot g_{\mu\nu},
\end{equation*}
for $Z =\Lambda(u)\Lb$. Indeed, by \eqref{null} and \eqref{vectorfield}, we have
\begin{align*}
\,^{(Z)}\pi_{LL}&=g\big(\nabla_L (\Lambda(u)\Lb),L\big)=0,\ \
\,^{(Z)}\pi_{\Lb\Lb}=g\big(\nabla_\Lb (\Lambda(u)\Lb),\Lb\big)=0,\\
\,^{(Z)}\pi_{L\Lb}&=\frac{1}{2}g\big(\nabla_L (\Lambda(u)\Lb),\Lb\big)+\frac{1}{2}g\big(\nabla_\Lb (\Lambda(u)\Lb),L\big)=-\Lambda'(u).
\end{align*}
This proves the lemma.
\end{proof}

We consider a solution $\varphi(t,x)$ to the following scalar linear wave equation on $\D_t$:
\begin{equation*}
\Box\varphi = \rho.
\end{equation*}
The energy-momentum tensor associated to $\varphi$ is defined as
\begin{equation*}
T_{\mu\nu}=\nabla_\mu \varphi \nabla_\nu \varphi -\frac{1}{2}g_{\mu\nu}\nabla^\alpha\varphi \nabla_\alpha \varphi.
\end{equation*}
It is straightforward to see that $T(L,L)=|L\varphi|^2$, $T(\Lb,\Lb)=|\Lb\varphi|^2$ and $T(L,\Lb)=0$.
We can also compute the divergence of $T_{\mu\nu}$:
\begin{equation*}
\nabla^\nu T_{\mu\nu}= \rho \nabla_\mu \varphi.
\end{equation*}
\begin{remark}[Conformal property] On $\mathbb{R}^{1+1}$, the theory of linear wave equations is a conformal theory, i.e., the associated deformation tensor $T_{\mu\nu}$ is trace-free:
\begin{equation*}
g^{\mu\nu}T_{\mu\nu}=0.
\end{equation*}
\end{remark}

Let $Z$ be a smooth (multiplier) vector field. Moreover, we assume that $Z$ is a conformal Killing vector field. Therefore, the current
\begin{equation*}
^{(Z)}J_\mu= T_{\mu\nu}Z^\nu,
\end{equation*}
satisfies the following divergence identity:
\begin{equation}\label{divergence identity}
\nabla^\mu \,^{(Z)}J_\mu=\rho Z(\varphi).
\end{equation}
This can be proved by the following computation:
\begin{align*}
\nabla^\mu \,^{(Z)}J_\mu&=\nabla^\mu \big(T_{\mu\nu}Z^\nu\big)=\rho Z(\varphi)+\underbrace{T_{\mu\nu}\nabla^\mu Z^\nu}_{=T_{\mu\nu}\,^{(Z)}\pi^{\mu\nu}\sim T_{\mu\nu} g^{\mu\nu}}.
\end{align*}
We used the fact that $Z$ is a conformal Killing vector field in the last step. Since $T_{\mu\nu}$ is tracefree, the last term vanishes and this proves \eqref{divergence identity}. In applications, we will always take $Z =\Lambdab(\ub)L+\Lambda(u)\Lb$ as in Lemma \ref{conformal vector fields}.

\bigskip

We will integrate the divergence identity \eqref{divergence identity} in the domain $\mathcal{D}_{t,\ub}^-$ and the domain is depicted as follows:

\ \ \ \ \ \ \ \ \ \ \ \ \ \ \ \ \ \ \ \ \ \ \ \ \ \ \ \ \ \ \ \ \ \ \ \ \ \ \  \includegraphics[width=3.2in]{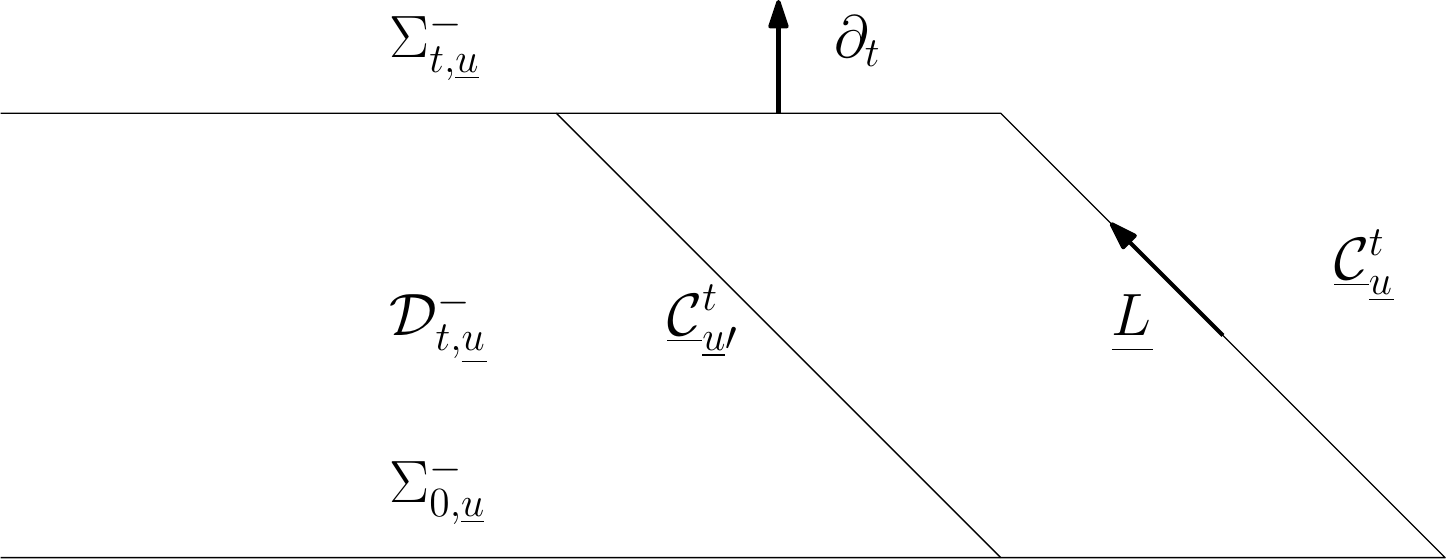}

\noindent We remark that the domain $\mathcal{D}_{t,\ub}^-$ is foliated by $\Cb^t_{\ub'}$ for $\ub'\leq \ub$. Since the (Lorentian) normal of $\Cb_{\ub}^t$ is $\Lb$, the Stokes formula implies that
\begin{equation*}
\int_{\Sigma_{t,\ub}^-}T(\partial_t,Z)+ \int_{\Cb_{\ub}^t} T(\Lb,Z)=\int_{\Sigma_{0,\ub}^-}T(\partial_t,Z)+\int\!\!\!\int_{\D^{-}_{t,\ub}}\rho Z\varphi.
\end{equation*}
We take $Z=\Lambda(u)\Lb$. In view of the fact that $T(L,\Lb)=0$, we obtain
\begin{equation*}
\frac{1}{2}\int_{\Sigma_{t,\ub}^-}\Lambda(u) (\Lb\varphi)^2+ \int_{\Cb_{\ub}^t}\Lambda(u) |\Lb\varphi|^2=\frac{1}{2}\int_{\Sigma_{0,\ub}^-}\Lambda(u) (\Lb\varphi)^2+\int\!\!\!\int_{\D^{-}_{t,\ub}} \Lambda(u)\Lb\varphi\cdot\rho
\end{equation*}
From now on, we will assume that the weight function $\Lambda \geq 0$. By enlarging the domains from $\Sigma^-_{0,\ub}$ to $\Sigma_{0}$ and from $\D^{-}_{t,\ub}$ to $\D_{t}$, we obtain that
\begin{equation*}
\int_{\Sigma_{t,\ub}^-}\Lambda(u) (\Lb\varphi)^2+ \int_{\Cb_{\ub}^t}\Lambda(u) |\Lb\varphi|^2 \lesssim \int_{\Sigma_{0}}\Lambda(u) (\Lb\varphi)^2+\int\!\!\!\int_{\D_{t}} \Lambda(u)|\Lb\varphi||\rho|.
\end{equation*}
Here and in the sequel, the notation $A\lesssim B$ means that there is a universal constant $C$ such that $A\leq CB$. Since this inequality holds for all $\ub$, we obtain that
\begin{equation}\label{lambda1}
\int_{\Sigma_{t}}\Lambda(u) (\Lb\varphi)^2+ \sup_{\ub\in \mathbb{R}}\int_{\Cb_{\ub}^t}\Lambda(u) |\Lb\varphi|^2 \lesssim \int_{\Sigma_{0}}\Lambda(u) (\Lb\varphi)^2+\int\!\!\!\int_{\D_{t}} \Lambda(u)|\Lb\varphi||\rho|.
\end{equation}
Similarly, we can work on $\D^{+}_{t,u}$ by using the multiplier $Z=\Lambdab(\ub)L$ and this yields
\begin{equation}\label{lambda2}
\int_{\Sigma_{t}}\Lambdab(\ub) (L\varphi)^2+ \sup_{u\in \mathbb{R}}\int_{\C_{u}^t}\Lambdab(\ub) |L\varphi|^2 \lesssim \int_{\Sigma_{0}}\Lambdab(\ub) (L\varphi)^2+\int\!\!\!\int_{\D_{t}} \Lambdab(\ub)|L\varphi||\rho|.
\end{equation}
By adding estimates \eqref{lambda1} and \eqref{lambda2} together, we finally obtain the energy estimates for linear equations: For $\Lambda \geq 0$, we have
\begin{equation}\label{linear energy estimates}
\begin{split}
& \ \ \int_{\Sigma_{t}}\Lambda(u) (\Lb\varphi)^2+\Lambdab(\ub) (L\varphi)^2+\sup_{\ub\in \mathbb{R}}\int_{\Cb_{\ub}^t}\Lambda(u) |\Lb\varphi|^2 +\sup_{u\in \mathbb{R}}\int_{\C_{u}^t}\Lambdab(\ub) |L\varphi|^2\\
& \leq C_0\Big(\int_{\Sigma_{0}}\Lambda(u) (\Lb\varphi)^2+\Lambdab(\ub) (L\varphi)^2+\int\!\!\!\int_{\D_{t}} \big(\Lambda(u)|\Lb\varphi|+\Lambdab(\ub)|L\varphi|\big)|\rho|\Big),
\end{split}
\end{equation}
where we can take $C_0$ to be at least $1$.

\section{The proof of the main theorem}

For the sake of simplicity, we will stick to the schematic form $\Box \Phi = Q(\partial \Phi, \partial \Phi)$ of the main equation \eqref{main equation} by ignoring all the constants and indices in \eqref{main equation}. In such a way, we may think of the system as a single equation for scalar functions. On the other hand, we can replace $\Phi$ in the following proof by $\Phi_k$ and then sum over $k$ to complete the proof for the original system.

\begin{lemma}\label{lemma commute derivatives} For all $k\leq 1$, we have
\begin{equation*}
\Box \partial^k_x\Phi = \sum_{i+j=k}Q(\partial \partial_x^i \Phi,\partial \partial_x^j \Phi),
\end{equation*}
where the $Q$'s are null forms and we have omitted all the irrelavent constants in front of $Q$.
\end{lemma}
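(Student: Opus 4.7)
The plan is to exploit the fact that $\partial_x$ is a translation vector field on Minkowski space, hence a Killing vector field, and in particular commutes with the d'Alembertian $\Box$ as well as with the two null vector fields $L = \partial_t + \partial_x$ and $\Lb = \partial_t - \partial_x$.

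First, I would dispose of the case $k = 0$, which is simply a restatement of the original equation \eqref{main equation} in the schematic form already introduced. Next, for $k = 1$, I would write
\[
\Box \partial_x \Phi = \partial_x \Box \Phi = \partial_x Q(\partial \Phi, \partial \Phi),
\]
where the first equality uses $[\partial_x, \Box] = 0$ and the second uses the equation itself. Applying the Leibniz rule to the bilinear form $Q$, this gives
\[
\partial_x Q(\partial \Phi, \partial \Phi) = Q(\partial_x \partial \Phi, \partial \Phi) + Q(\partial \Phi, \partial_x \partial \Phi).
\]
The two resulting terms correspond precisely to the two decompositions $(i,j) = (1,0)$ and $(i,j) = (0,1)$ with $i+j = 1$.

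The last point to verify is that each term on the right-hand side is genuinely a null form of the shape required by the lemma, that is, a linear combination of products of the form $L(\partial_x^i \Phi_k) \cdot \Lb(\partial_x^j \Phi_l)$. Since $\partial_x$ has constant coefficients and the Minkowski connection is flat, the commutators $[\partial_x, L]$ and $[\partial_x, \Lb]$ both vanish. Thus writing $Q(\partial \Phi, \partial \Phi) = \alpha\, L\Phi\, \Lb\Phi + \beta\, \Lb\Phi\, L\Phi$ and differentiating in $x$, every factor of the form $L\Phi$ becomes $L(\partial_x \Phi)$ and every factor $\Lb\Phi$ becomes $\Lb(\partial_x \Phi)$, which is exactly the structure of a null form on $\partial \partial_x \Phi$ and $\partial \Phi$.

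There is no serious obstacle: the statement is essentially a commutator computation, and the only content is the observation that the null structure is preserved because $\partial_x$ commutes with both null vector fields. Because the lemma is only stated for $k \leq 1$, there is no combinatorial bookkeeping for higher-order commutators, so the entire argument reduces to one application of the Leibniz rule together with flatness.
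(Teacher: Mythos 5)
Your proof is correct and follows the same route as the paper, which simply commutes $\partial_x^k$ through the equation; your additional observation that $[\partial_x,L]=[\partial_x,\Lb]=0$ preserves the null structure is exactly the (unstated) content of the paper's one-line argument.
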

\begin{proof}
We simply commute $\partial^k$ with $\Box \Phi = Q(\partial \Phi, \partial \Phi)$. The proof is straightforward.
\end{proof}

In the rest of the paper, we choose $\Lambda(u)$ and $\Lambdab(\ub)$ as follows:
\begin{equation}\label{lambdafunction}
\Lambda(u) = (1+|u|^2)^{1+\delta}, \ \ \Lambdab(\ub)=(1+|\ub|^2)^{1+\delta}.
\end{equation}

In view of \eqref{linear energy estimates}, for $k=0,1$, we define energy norms as follows:
\begin{equation*}
\begin{split}
\E_k(t)& = \int_{\Sigma_{t}}\Lambda(u) (\Lb\partial^k_x \Phi)^2+\Lambdab(\ub) (L\partial^k_x \Phi)^2,\\
\F_k(t)& = \sup_{\ub\in \mathbb{R}}\int_{\Cb_{\ub}^t}\Lambda(u) |\Lb\partial^k_x\Phi|^2 +\sup_{u\in \mathbb{R}}\int_{\C_{u}^t}\Lambdab(\ub) |L\partial^k_x\Phi|^2.
\end{split}
\end{equation*}
We also define the total energy norms as follows:
\begin{equation*}
\E(t) = \sum_{k=0}^1 \E_k(t),\ \ \F(t) = \sum_{k=0}^1 \F_k(t).
\end{equation*}
We notice that if $t=0$, we have $\F(0)=0$. The data determine $\E(0)$. Indeed, the functions $F$ and $G$ determine a constant $C_1$ so that
\begin{equation}\label{initial energy}
\E(0) = C_1 \varepsilon^2.
\end{equation}
Now we use the method of continuity: We assume that the solution $\Phi$ exists for $t\in [0,T^*]$ so that it has the following bound
\begin{equation}\label{bootstrap assumption}
\E(t)+\F(t) \leq 6 C_0 C_1 \varepsilon^2.
\end{equation}
Since this bound holds for $t=0$, we can always find such a $T^*$. In the rest of the paper, we will show that under assumption \eqref{bootstrap assumption}, we can indeed prove a better bound. Namely, for all $t\in[0,T^*]$, we will show that there exists a universal constant $\varepsilon_0$, so that we have the improved estimate:
\begin{equation}\label{bootstrap assumption improved}
\E(t)+\F(t) \leq 4 C_0 C_1 \varepsilon^2
\end{equation}
for all $\varepsilon <\varepsilon_0$.
The constant $\varepsilon_0$ will be independent of $T_*$. Thus assumption \eqref{bootstrap assumption} will never be saturated so that we can always continue $T_*$ to $+\infty$. This will prove the global existence for small data solutions of \eqref{main equation}. Therefore, the crux of the matter boils down to proving \eqref{bootstrap assumption improved} under \eqref{bootstrap assumption}.

\medskip

In view of \eqref{bootstrap assumption}, we first have the following pointwise bounds:
\begin{lemma}\label{lemma pointwise bound} Under assumption \eqref{bootstrap assumption}, there exists a universal constant $C_2$ so that
\begin{equation*}
|L\Phi(t,x)|\leq \frac{C_2\varepsilon}{\Lambdab(\ub)^\frac{1}{2}}, \ \ |\Lb\Phi(t,x)|\leq \frac{C_2\varepsilon}{\Lambda(u)^\frac{1}{2}}.
\end{equation*}
\end{lemma}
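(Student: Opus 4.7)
The plan is to exploit a one-dimensional weighted Sobolev-type embedding on each constant-time slice. Under the bootstrap assumption \eqref{bootstrap assumption}, both $\E_0(t)$ and $\E_1(t)$ are $O(\varepsilon^2)$, so on $\Sigma_t$ we control
\begin{equation*}
\int_{\Sigma_t}\Lambdab(\ub)\bigl(|L\Phi|^2+|L\partial_x\Phi|^2\bigr)\,dx\lesssim\varepsilon^2,\qquad \int_{\Sigma_t}\Lambda(u)\bigl(|\Lb\Phi|^2+|\Lb\partial_x\Phi|^2\bigr)\,dx\lesssim\varepsilon^2.
\end{equation*}
Since $[L,\partial_x]=[\Lb,\partial_x]=0$, the top-order terms are $\int\Lambdab|\partial_x(L\Phi)|^2$ and $\int\Lambda|\partial_x(\Lb\Phi)|^2$, i.e.\ $L\Phi$ and $\Lb\Phi$ lie in weighted $H^1$ on $\Sigma_t$ with norms of size $\varepsilon$. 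In one spatial dimension, $H^1 \hookrightarrow L^\infty$, so I only need to track how the weights interact with this Sobolev step.

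Fix a point $(t,x_0)$ and set $\ub_0=(t+x_0)/2$. The fundamental theorem of calculus writes
\begin{equation*}
|L\Phi(t,x_0)|^2 \;=\; \mp\,2\int_{x_0}^{\pm\infty} L\Phi\cdot\partial_x(L\Phi)\,dx,
\end{equation*}
and the direction of integration is chosen according to the sign of $\ub_0$: toward $+\infty$ if $\ub_0\ge 0$, and toward $-\infty$ if $\ub_0<0$. With this choice, every point in the integration interval satisfies $|\ub(x)|\ge|\ub_0|$, hence $\Lambdab(\ub(x))\ge\Lambdab(\ub_0)$ by \eqref{lambdafunction}. Multiplying by $\Lambdab(\ub_0)$ and using this pointwise bound,
\begin{equation*}
\Lambdab(\ub_0)\,|L\Phi(t,x_0)|^2 \;\le\; 2\int_{\Sigma_t}\Lambdab(\ub)\,|L\Phi|\,|\partial_x L\Phi|\,dx.
\end{equation*}
Cauchy--Schwarz and the bootstrap bound then give $\Lambdab(\ub_0)|L\Phi(t,x_0)|^2\lesssim\varepsilon^2$, which is the first claimed inequality. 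The estimate for $\Lb\Phi$ is completely symmetric, using $u_0=(t-x_0)/2$ and the weight $\Lambda(u)$; since $u$ is a decreasing function of $x$ on $\Sigma_t$, the rule for the direction of integration is simply reversed.

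The only delicate point is the non-monotonicity of the weight $\Lambdab(\ub)=(1+\ub^2)^{1+\delta}$ viewed as a function of $x$: it attains its minimum at $\ub=0$, so a direction of integration fixed in advance could traverse values with $\Lambdab(\ub)<\Lambdab(\ub_0)$ and destroy the desired lower bound. Splitting into the cases $\ub_0\ge 0$ and $\ub_0<0$ (and analogously for $u_0$) removes this issue. Everything else is routine; the constant $C_2$ can be taken proportional to $\sqrt{C_0 C_1}$, with an extra factor of $\sqrt{2}$ coming from the Sobolev step.
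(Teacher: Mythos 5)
Your proof is correct and rests on the same idea as the paper's: the bootstrap bound on $\E(t)$ gives weighted $H^1$ control of $L\Phi$ and $\Lb\Phi$ on each slice $\Sigma_t$, and a one-dimensional Sobolev embedding converts this into the weighted $L^\infty$ bound. The only (minor) difference is in how the weight is handled: the paper applies the standard Sobolev inequality to the product $\Lambdab(\ub)^{\frac{1}{2}}L\Phi$ and uses $|\partial_x\Lambdab(\ub)^{\frac{1}{2}}|\leq\Lambdab(\ub)^{\frac{1}{2}}$, whereas you keep the weight outside the fundamental theorem of calculus and exploit its monotonicity in $|\ub|$ by choosing the direction of integration according to the sign of $\ub_0$ --- an equally valid implementation of the same estimate, using the same input $\E(t)\lesssim\varepsilon^2$.
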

\begin{proof}
It suffices to prove the first inequality. The second can be proved in exactly the same way. Indeed, according to the Sobolev inequality on $\mathbb{R}$, since $|\partial_x\Lambdab(\ub)^\frac{1}{2}|\leq \Lambdab(\ub)^\frac{1}{2}$, we have
\begin{align*}
|\Lambdab(\ub)^\frac{1}{2}L \Phi|^2 &\lesssim \|\Lambdab(\ub)^\frac{1}{2}L \Phi\|^2_{L^2(\mathbb{R}_x)}+\|\partial_x\big(\Lambdab(\ub)^\frac{1}{2}L \Phi\big)\|^2_{L^2(\mathbb{R}_x)}\\
&\lesssim \|\Lambdab(\ub)^\frac{1}{2}L \Phi\|^2_{L^2(\mathbb{R}_x)}+\|\Lambdab(\ub)^\frac{1}{2} L \partial_x  \Phi\|^2_{L^2(\mathbb{R}_x)} \\
&\lesssim \E(t) \\
&\leq 6C_0C_1 \varepsilon^2.
\end{align*}
The  desired inequality follows immediately.
\end{proof}
\begin{remark}
The lemma shows $|L\Phi(t,x)|\lesssim \frac{\varepsilon}{(1+|t+x|)^{1+\delta}}$ and $|\Lb\Phi(t,x)|\leq \frac{\varepsilon}{(1+|t+x|)^{1+\delta}}$. Thus, although the waves have no decay in time, they still decay spatially away from their centers.
\end{remark}
Similar to the above lemma, we also have
\begin{lemma}\label{lemma pointwise bound different weights} Under assumption \eqref{bootstrap assumption}, there exists a universal constant $C_3$ so that
\begin{equation*}
\|\frac{\Lambda(u)^\frac{1}{2}}{\Lambdab(\ub)^\frac{1}{4}}\Lb\Phi(t,x)\|_{L^\infty(\Sigma_t)}\leq C_3\Big(\|\frac{\Lambda(u)^\frac{1}{2}}{\Lambdab(\ub)^\frac{1}{4}}\Lb\Phi(t,x)\|_{L^2(\Sigma_t)}+\|\frac{\Lambda(u)^\frac{1}{2}}{\Lambdab(\ub)^\frac{1}{4}}\Lb\partial_x\Phi(t,x)\|_{L^2(\Sigma_t)}\Big),
\end{equation*}
and
\begin{equation*}
\|\frac{\Lambdab(\ub)^\frac{1}{2}}{\Lambda(u)^\frac{1}{4}}L\Phi(t,x)\|_{L^\infty(\Sigma_t)}\leq C_3\Big(\|\frac{\Lambdab(\ub)^\frac{1}{2}}{\Lambda(u)^\frac{1}{4}}L\Phi(t,x)\|_{L^2(\Sigma_t)}+\|\frac{\Lambdab(\ub)^\frac{1}{2}}{\Lambda(u)^\frac{1}{4}}L\partial_x\Phi(t,x)\|_{L^2(\Sigma_t)}\Big).
\end{equation*}
\end{lemma}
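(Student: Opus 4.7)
The plan is to apply the standard one-dimensional Sobolev embedding $H^1(\mathbb{R}) \hookrightarrow L^\infty(\mathbb{R})$ at fixed $t$ to the weighted quantity
\[
g(x) := \frac{\Lambda(u)^{1/2}}{\Lambdab(\ub)^{1/4}}\,\Lb\Phi(t,x),
\]
so that $\|g\|_{L^\infty(\Sigma_t)}^2 \lesssim \|g\|_{L^2(\Sigma_t)}^2 + \|\partial_x g\|_{L^2(\Sigma_t)}^2$. The first term on the right is already in the desired form. For the second, I use the Leibniz rule to split
\[
\partial_x g = \partial_x\!\Big(\frac{\Lambda(u)^{1/2}}{\Lambdab(\ub)^{1/4}}\Big)\,\Lb\Phi \;+\; \frac{\Lambda(u)^{1/2}}{\Lambdab(\ub)^{1/4}}\,\partial_x\Lb\Phi,
\]
and observe that $\partial_x$ and $\Lb=\partial_t-\partial_x$ commute as coordinate vector fields, so $\partial_x\Lb\Phi = \Lb\partial_x\Phi$. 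Thus the second piece already matches the second term on the right-hand side of the stated inequality.

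The only remaining point is to show that the weight is ``self-controlled'' under $\partial_x$, namely
\[
\Big|\partial_x\Big(\frac{\Lambda(u)^{1/2}}{\Lambdab(\ub)^{1/4}}\Big)\Big| \;\lesssim\; \frac{\Lambda(u)^{1/2}}{\Lambdab(\ub)^{1/4}}.
\]
This is a direct computation from the explicit formula \eqref{lambdafunction}. Since $u = (t-x)/2$ and $\ub=(t+x)/2$, one has $\partial_x u = -1/2$ and $\partial_x\ub = 1/2$, and a quick calculation gives
\[
\frac{|\partial_u \Lambda(u)^{1/2}|}{\Lambda(u)^{1/2}} = (1+\delta)\frac{|u|}{1+|u|^2} \leq 1+\delta,
\qquad
\frac{|\partial_{\ub} \Lambdab(\ub)^{-1/4}|}{\Lambdab(\ub)^{-1/4}} = \tfrac{1+\delta}{2}\frac{|\ub|}{1+|\ub|^2} \leq \tfrac{1+\delta}{2},
\]
which yields the claimed weight bound with a universal constant.

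Combining these observations gives
\[
\|\partial_x g\|_{L^2(\Sigma_t)} \;\lesssim\; \Big\|\frac{\Lambda(u)^{1/2}}{\Lambdab(\ub)^{1/4}}\Lb\Phi\Big\|_{L^2(\Sigma_t)} + \Big\|\frac{\Lambda(u)^{1/2}}{\Lambdab(\ub)^{1/4}}\Lb\partial_x\Phi\Big\|_{L^2(\Sigma_t)},
\]
and inserting this into the Sobolev inequality yields the first stated estimate. The second inequality follows by the symmetric argument with the roles of $L$ and $\Lb$ (and of $\Lambda$ and $\Lambdab$) interchanged; the weight $\Lambdab(\ub)^{1/2}/\Lambda(u)^{1/4}$ satisfies the analogous self-control bound by the same direct computation. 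There is no serious obstacle in the proof: the only technical point is the weight-derivative estimate, which is immediate from the polynomial form of $\Lambda,\Lambdab$ and the fact that the weights decay in $u$, $\ub$ at a rate slower than their first derivatives would force, so the logarithmic derivative $|u|/(1+|u|^2)$ stays bounded.
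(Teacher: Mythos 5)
Your proof is correct and follows essentially the same route as the paper: the authors also apply the one-dimensional Sobolev embedding exactly as in Lemma \ref{lemma pointwise bound}, reducing everything to the weight-derivative bound $\big|\partial_x\big(\Lambda(u)^{1/2}\Lambdab(\ub)^{-1/4}\big)\big|\lesssim \Lambda(u)^{1/2}\Lambdab(\ub)^{-1/4}$, which they leave as a direct computation and you carry out explicitly. No gaps.
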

\begin{proof}
By using the new weight function $\frac{\Lambda(u)^\frac{1}{2}}{\Lambdab(\ub)^\frac{1}{4}}$, this inequality can be derived in exactly the same manner as in Lemma \ref{lemma pointwise bound}. In fact, it suffices to notice that
\begin{align*}
\partial_x\Big(\frac{\Lambda(u)^\frac{1}{2}}{\Lambdab(\ub)^\frac{1}{4}}\Big)\lesssim \frac{\Lambda(u)^\frac{1}{2}}{\Lambdab(\ub)^\frac{1}{4}}.
\end{align*}
This can be checked by a direct computation.
\end{proof}
\medskip

For each $k=0,1$, we take $\varphi = \partial_x^k \Phi$ in the linear energy estimates \eqref{linear energy estimates}. In view of Lemma \ref{lemma commute derivatives}, we obtain
\begin{equation*}
\begin{split}
 \E_k(t)+\F_k(t)\leq C_0\Big(\E_k(0)+\sum_{i+j= k}\int\!\!\!\int_{\D_{t}} \big(\Lambda(u)|\Lb\partial_x^k\Phi|+\Lambdab(\ub)|L\partial_x^k\Phi|\big)\big|Q(\partial \partial_x^i \Phi,\partial \partial_x^j \Phi)\big|\Big).
\end{split}
\end{equation*}
We can also take the sum of all the above estimates and this yields
\begin{equation}
\label{energy estimates}
\begin{split}
 \E(t)+\F(t)&\leq 2C_0C_1\varepsilon^2+C_0\sum_{i+j= k, \atop 0\leq k \leq 1}\int\!\!\!\int_{\D_{t}} \big(\Lambda(u)|\Lb\partial_x^k\Phi|+\Lambdab(\ub)|L\partial_x^k\Phi|\big)\big|Q(\partial \partial_x^i \Phi,\partial \partial_x^j \Phi)\big|\\
 &\leq 2C_0C_1\varepsilon^2+C_0\underbrace{\sum_{i+j= k, \atop 0\leq k \leq 1}\int\!\!\!\int_{\D_{t}} \Lambda(u)|\Lb\partial_x^k\Phi|\big|Q(\partial \partial_x^i \Phi,\partial \partial_x^j \Phi)\big|}_{\mathbf{I}}\\
 &\ \ \ \ \ \ \ \ \ \ \ \ \ \ \ \ +C_0\sum_{i+j= k, \atop 0\leq k \leq 1}\int\!\!\!\int_{\D_{t}}\Lambdab(\ub)|L\partial_x^k\Phi|\big|Q(\partial \partial_x^i \Phi,\partial \partial_x^j \Phi)\big|.
\end{split}
\end{equation}
To bound the nonlinear terms, in view of the symmetry, it suffices to bound the term $\mathbf{I}$ in \eqref{energy estimates}. Since $Q$ is a null form, we have
\begin{equation*}
|Q(\partial \partial_x^i \Phi,\partial \partial_x^j \Phi)\big|\lesssim |L\partial_x^i \Phi||\Lb\partial_x^j \Phi|+|\Lb\partial_x^i \Phi||L\partial_x^j \Phi|.
\end{equation*}
Therefore, we may rewrite $\mathbf{I}$ as
\begin{equation}\label{first bound on I}
\mathbf{I}\lesssim \sum_{i+j= k, \atop 0\leq k \leq 1}\int\!\!\!\int_{\D_{t}} \Lambda(u)|\Lb\partial_x^k\Phi||L\partial_x^i \Phi||\Lb\partial_x^j \Phi|.
\end{equation}
We may classify the terms in the integrand into two cases according to $i=0$ or $i=1$.

\medskip

\noindent {\bf Case 1: $i=0$}. We have to bound the term
\begin{align*}
\mathbf{I_1}=\int\!\!\!\int_{\D_{t}} \Lambda(u)|\Lb\partial_x^k\Phi||L  \Phi||\Lb\partial_x^j \Phi|.
\end{align*}
 Applying Lemma \ref{lemma pointwise bound} to $|L\Phi|$, we have	
\begin{align*}
\mathbf{I_1}&\lesssim \int\!\!\!\int_{\D_{t}} \frac{\varepsilon}{\Lambdab(\ub)^\frac{1}{2}}\Lambda(u)|\Lb\partial_x^k\Phi||\Lb\partial_x^j \Phi|\\
&\lesssim \int\!\!\!\int_{\D_{t}} \frac{\varepsilon}{\Lambdab(\ub)^\frac{1}{2}}\Big(\Lambda(u)|\Lb\partial_x^k\Phi|^2+ \Lambda(u)|\Lb\partial_x^j \Phi|^2\Big).
\end{align*}
Since the spacetime region $\D_{t}$ is foliated by $\Cb_{\ub}^t$ for $\ub\in \mathbb{R}$, by Fubini's theorem, we have
\begin{align*}
\mathbf{I_1}&\lesssim \int_\mathbb{R}\Big[\int_{\Cb_{\ub}^t} \frac{\varepsilon}{\Lambdab(\ub)^\frac{1}{2}}\Big(\Lambda(u)|\Lb\partial_x^k\Phi|^2+ \Lambda(u)|\Lb\partial_x^j \Phi|^2\Big)\Big]d\ub\\
&= \int_\mathbb{R}\frac{\varepsilon}{\Lambdab(\ub)^\frac{1}{2}}\Big(\underbrace{\int_{\Cb_{\ub}^t} \Lambda(u)|\Lb\partial_x^k\Phi|^2+ \Lambda(u)|\Lb\partial_x^j \Phi|^2}_{\leq \F(t)}\Big)d\ub\\
&\lesssim \int_\mathbb{R}\frac{\varepsilon^3}{\Lambdab(\ub)^\frac{1}{2}}d\ub.
\end{align*}
On the other hand, we know that $\Lambdab(\ub)^\frac{1}{2}\sim (1+|\ub|)^{1+\delta}$ for $|\ub|\rightarrow \infty$, thus the above integral is finite. As a consequence, we obtain that
\begin{align*}
\mathbf{I_1}\lesssim \varepsilon^3.
\end{align*}

\noindent {\bf Case 2: $i=1$}. In this case, we must have $j=0$ and $k=1$, since $i+j =k \leq 1$. We now have to bound the term
\begin{align*}
\mathbf{I_1}&=\int\!\!\!\int_{\D_{t}} \Lambda(u)|\Lb\partial_x\Phi||L \partial_x \Phi||\Lb \Phi|\\
&=\int\!\!\!\int_{\D_{t}} \underbrace{\Big(\Lambdab(\ub)^{-\frac{1}{4}}\Lambda(u)^\frac{1}{2}|\Lb\partial_x\Phi|\Big)}_{L^2_tL^2_x}\underbrace{\Big(\Lambdab(\ub)^\frac{1}{2}|L\partial_x \Phi|\Big)}_{L^\infty_tL^2_x}\underbrace{\Big(\Lambdab(\ub)^{-\frac{1}{4}}\Lambda(u)^\frac{1}{2}|\Lb \Phi|\Big)}_{L^2_tL^\infty_x}\\
&\leq \underbrace{\Big(\int\!\!\!\int_{\D_{t}} \frac{\Lambda(u)|\Lb\partial_x\Phi|^2}{\Lambdab(\ub)^\frac{1}{2}}\Big)^\frac{1}{2}}_{\mathbf{I_2}}\underbrace{\sup_{t\in[0,T^*]}\Big(\int_{\Sigma_{t}} \Lambdab(\ub)|L\partial_x\Phi|^2\Big)^\frac{1}{2}}_{\mathbf{I_3}}\underbrace{\Big(\int_{0}^t \|\frac{\Lambda(u)^\frac{1}{2}}{\Lambdab(\ub)^{\frac{1}{4}}}|\Lb \Phi|\|^2_{L^\infty(\Sigma_\tau)}d\tau\Big)^\frac{1}{2}}_{\mathbf{I_4}}.
\end{align*}
The term $\mathbf{I_2}$ can be treated in a similar manner as for $\mathbf{I_1}$:
\begin{align*}
(\mathbf{I_2})^2 &\leq \int_\mathbb{R}\int_{\Cb_{\ub}^t} \frac{1}{\Lambdab(\ub)^\frac{1}{2}} \Lambda(u)|\Lb\partial_x\Phi|^2 d\ub\\
&= \int_\mathbb{R}\frac{1}{\Lambdab(\ub)^\frac{1}{2}}\Big(\underbrace{\int_{\Cb_{\ub}^t} \Lambda(u)|\Lb\partial_x\Phi|^2}_{\leq \F_1(t)}\Big)d\ub\\
&\lesssim \int_\mathbb{R}\frac{\varepsilon^2}{\Lambdab(\ub)^\frac{1}{2}}d\ub.
\end{align*}
Since $\Lambdab(\ub)^\frac{1}{2}$ is integrable in $\ub$, we have
\begin{align*}
I_2 \lesssim \varepsilon.
\end{align*}
The term $\mathbf{I_3}$ is part of the energy norm $\E_1(t)$, thus, by \eqref{bootstrap assumption}, we have
\begin{align*}
I_3 \lesssim \varepsilon.
\end{align*}
For the term $\mathbf{I_4}$, according to Lemma \ref{lemma pointwise bound different weights}, we have
\begin{align*}
\mathbf{I_4} &\lesssim
\Big(\int_{0}^t \|\frac{\Lambda(u)^\frac{1}{2}}{\Lambdab(\ub)^\frac{1}{4}}\Lb\Phi(t,x)\|^2_{L^2(\Sigma_\tau)}+\|\frac{\Lambda(u)^\frac{1}{2}}{\Lambdab(\ub)^\frac{1}{4}}\Lb\partial_x\Phi(t,x)\|^2_{L^2(\Sigma_\tau)}d\tau\Big)^\frac{1}{2}\\
&\lesssim \Big(\int\!\!\!\int_{\D_{t}} \frac{\Lambda(u)|\Lb\Phi|^2}{\Lambdab(\ub)^\frac{1}{2}}\Big)^\frac{1}{2}+\Big(\int\!\!\!\int_{\D_{t}} \frac{\Lambda(u)|\Lb\partial_x\Phi|^2}{\Lambdab(\ub)^\frac{1}{2}}\Big)^\frac{1}{2}.
\end{align*}
Both of the terms are in the same form as $\mathbf{I_2}$. Thus they can be bounded in the same manner. We then have
\begin{align*}
\mathbf{I}_4 \lesssim \varepsilon.
\end{align*}
Finally, in Case 2, we still have
\begin{align*}
\mathbf{I} \lesssim \varepsilon^3.
\end{align*}

\bigskip

By putting all the estimates together in \eqref{energy estimates}, for some universal constant $C_4$, we obtain that for all $t\in [0,T^*]$, 
\begin{equation*}
 \E(t)+\F(t)\leq 2C_0C_1\varepsilon^2+C_4\varepsilon^3.
\end{equation*}
We then take $\varepsilon_0$ such that 
\begin{equation*}
\varepsilon_0\leq \frac{2C_0C_1}{C_4}.
\end{equation*}
Therefore, for $\varepsilon\leq \varepsilon_0$ and for all $t\in [0,T^*]$, we have
\begin{align*}
 \E(t)+\F(t)&\leq 2C_0C_1\varepsilon^2+C_4\varepsilon^2 \times \frac{2C_0C_1}{C_4}\\
 &\leq 4C_0C_1\varepsilon^2.
\end{align*}
This proves the improved estimate \eqref{bootstrap assumption improved}.

This completes the proof of the main theorem.

\end{document}